\documentclass[a4paper]{amsart}

\usepackage[T2A]{fontenc}
\usepackage[utf8]{inputenc}
\usepackage[english]{babel}

\usepackage{amsmath, amssymb, amsfonts, amsthm}
\usepackage{mathtools}

\usepackage{hyperref}

\usepackage{microtype}

\allowdisplaybreaks
\usepackage{enumitem}

\newcommand{\sign}{\operatorname{sign}}
\newcommand{\T}{\operatorname{T}}
\renewcommand{\P}{\operatorname{P}}

\theoremstyle{plain}
\newtheorem{theorem}{Theorem}
\newtheorem{lemma}{Lemma}
\newtheorem{definition}{Definition}

\newtheorem{conjecture}{Conjecture}

\title{Explicit formula for the Rasmussen invariant of odd pretzel knots}

\author{Yury Belousov}
\address{Yury Belousov \\ Leonhard Euler International Mathematical Institute in Saint Petersburg}
\email{bus99@yandex.ru}

\author{Vadim Stepaniuk}
\address{Vadim Stepaniuk \\ Saint Petersburg State University}
\email{stepaniuk.vadim.math@gmail.com}

\date{}

\begin{document}

    \begin{abstract}
        We compute the Rasmussen $s$-invariant for all odd pretzel knots, giving an explicit closed formula. The formula extends to all slice-torus invariants that are invariant under mutation. In the absence of mutation invariance, we prove the formula under a mild condition on the pretzel parameters, which we conjecture to be unnecessary.
    \end{abstract}

    \maketitle


    \section*{Introduction}

        Rasmussen introduced the celebrated $s$-invariant in~\cite{Ras10}, extracting it from Lee's deformation of Khovanov homology~\cite{Lee05}. It has since proved to be a powerful tool in smooth knot theory: Rasmussen used it to give the first purely combinatorial proof of the Milnor conjecture, and Piccirillo later used it to show that the Conway knot is not smoothly slice~\cite{Pic20}.

        One natural way to understand the behaviour of a knot invariant is to compute it on families of knots with simple diagrammatic descriptions. Among these, pretzel knots stand out as a versatile testing ground. Recall that a \emph{pretzel link}~$\P(a_{1}, \ldots, a_{n})$ is a link admitting a diagram consisting of $n$ blocks, where the $i$-th block contains $|a_{i}|$ crossings twisted according to the sign of $a_{i}$, and the blocks are connected cyclically, as shown in Figure~\ref{fig:pretzel_ex}; such a diagram is called \emph{standard}.

        A pretzel link has exactly one component (i.e. it is a knot) if and only if either precisely one $a_{i}$ is even, or all $a_{i}$ are odd and $n$ is odd. In the present paper, we consider knots satisfying the latter condition, which are called \emph{odd} pretzel knots. For every pretzel knot, we orient the standard diagram so that the bottom arc is directed from left to right, as shown on the left in Figure~\ref{fig:pretzel_ex}.

        However, despite the simplicity of their diagrams, the $s$-invariant of pretzel knots has so far been computed only in special cases. Suzuki~\cite{Suz10} settled the three-strand case almost completely, and Kim and Sano~\cite{KS25} later completed it, also giving a new diagrammatic proof of the known cases. For pretzel knots with more blocks, only certain families of even pretzel knots had been treated, in the work of Kim and Yeon~\cite{KY15} and Kawamura~\cite{Kaw15}. In the present paper, we give an explicit formula valid for every odd pretzel knot, as follows.

        \begin{theorem}\label{thm:s_value_odd}
    		Let $n>0$ be odd and $a_{1}, \ldots, a_{n}$ be odd integers. Then
            \begin{equation}\label{eq:s_value_odd}
                s\bigl(\P(a_{1}, \ldots, a_{n})\bigr) = -\sum_{i=1}^{n} \sign(a_{i}) + \sign(a_{\min}),
            \end{equation}
            where $a_{\min}$ is the parameter with minimal absolute value in the multiset obtained from the multiset $\{a_{1}, \ldots, a_{n}\}$ by iteratively removing pairs of the form $\{t, -t\}$ until no such pair remains.
        \end{theorem}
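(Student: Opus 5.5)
We will prove \eqref{eq:s_value_odd} using only properties shared by slice-torus invariants, plus mutation invariance of $s$.

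\textbf{Step 1: reduction by mutation.} Mutation permutes the blocks of a pretzel knot arbitrarily, and $s$ is mutation invariant. We may therefore reorder the parameters freely. We would first arrange all cancelling pairs $\{t,-t\}$ to be adjacent.

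\textbf{Step 2: removing pairs.} Two adjacent blocks $t,-t$ (with $t$ odd) form a rational tangle summand that cancels up to a ribbon concordance-type move. Concretely, $\P(a_1,\ldots,a_k,t,-t)$ is concordant to $\P(a_1,\ldots,a_k)$. The reason is that a single band move (a saddle between the two strands passing through the adjacent blocks $t$ and $-t$) produces $\P(a_1,\ldots,a_k)$ split-union an unknot, since the tangle $t+(-t)$ becomes trivial after the saddle. This gives a genus-zero ribbon cobordism with one saddle and one birth, so the two knots are concordant. Both sides of the formula behave the same way: $\sum\sign$ drops by $\sign(t)+\sign(-t)=0$, and $a_{\min}$ is unchanged. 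We must check that the orientations are compatible so that the band is orientable; because all parameters are odd, the strands in adjacent blocks are antiparallel and this works. Hence we reduce to the case where no pairs $\{t,-t\}$ occur, and $n$ stays odd.

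\textbf{Step 3: the pair-free case.} Sort the parameters so that $a_{\min}$ is a block of minimal absolute value, say $a_{\min}>0$ by mirror symmetry (both sides change sign under mirroring). Write $p$ and $q$ for the numbers of positive and negative parameters. We need lower and upper bounds that match.

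For one inequality, we change crossings within blocks to reduce $|a_i|$ down to $1$ in sign-preserving fashion. Each negative-to-positive crossing change shifts a slice-torus invariant monotonically by at most one in the appropriate direction. We then compare with $\P(\pm1,\ldots,\pm1)$, a torus knot $T(2,p-q)$ up to cancellation, or with a positive/negative diagram. For positive-type pieces, the sharper bound $s=-w+O+1$ comes from the Seifert-state (sharpness) formula of Kawamura–Lobb for diagrams in which the relevant negative crossings are appropriately placed.

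For the opposite inequality, we use the Kawamura–Lobb bound $s\le w-O+2O_-+1$ (and its mirror) on the standard diagram with the orientation fixed in the statement. In odd pretzel diagrams each block's crossings are all of one sign: the orientation makes both strands of every block antiparallel, so a block $a_i$ contributes $-a_i$ to the writhe. The Seifert circles can be counted explicitly as $\sum_i(|a_i|-1)+2$, and $O_-$ is determined by the negative blocks. The count then yields $-\sum\sign(a_i)+1$, provided the smallest block is positive. If instead $|a_{\min}|$ is attained on a negative block, we get the bound $-1$, and the matching lower bound comes symmetrically.

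\textbf{Main obstacle.} We expect the hardest point to be making the lower bound sharp when positive and negative blocks are mixed and the smallest one has a given sign. Here neither diagram is positive, and a naive crossing change loses $1$. Our first approach would be an explicit cobordism that merges a negative block $-b$ with a positive block $c$, where $c<b$, into a single block with parameter $-(b-c)$-ish. This is done via $c$ saddles, or via a genus-bounded cobordism with controlled Euler characteristic, inductively reducing to a three-strand case covered by Suzuki and Kim–Sano.
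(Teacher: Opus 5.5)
Your Steps 1 and 2 are the paper's own reduction: permute blocks by mutation, then cancel adjacent $t,-t$ by a genus-zero cobordism. Your upper bound is the paper's slice-Bennequin computation on the standard diagram. The inequality should read $s\le w+O-2O_-+1$, and on that diagram it gives $-\sum_i\sign(a_i)\pm1$ whatever the sign of the minimal block.

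The gap is the other inequality, which is the real content of the theorem. For a pair-free $K$ with $a_{\min}>0$ and blocks of both signs you must show $s(K)\ge-\sum_i\sign(a_i)+1$, and nothing in Step 3 does this.
\begin{itemize}
\item On the standard diagram the two Bennequin bounds differ by exactly $2$ as soon as $c_+,c_-\ge1$. So no sharpness criterion of Kawamura--Lobb type applies to it, and you never produce another diagram on which the bounds meet.
\item Crossing changes give only one-sided estimates, each costing up to $2$ in $s$. Flipping a block $a_j>0$ to a negative one this way takes $a_j+1$ changes. You also never name a target knot, reachable by changes of the favourable sign, whose $s$ is already known to equal $-\sum_i\sign(a_i)+1$.
\item Your fallback of merging a block $c$ into a block $-b$ with $c$ saddles gives a cobordism whose genus grows with $c$. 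It therefore cannot determine $s$ to within $2$.
\item The proposed reduction to three strands is not an argument as stated.
\end{itemize}

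Two ingredients are missing.

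First, you need an exactly computable base family. The knot $\P(a_{\min},-q_1,\ldots,-q_{n-1})$ with all $q_j>a_{\min}$ is strongly quasipositive by Rudolph. Livingston's theorem and Gabai's genus formula then give $s=2g_3=n-1=-\sum_i\sign(a_i)+1$. For $s$ alone, Kim--Sano's diagrammatic computation also works.

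Second, you need the full-twist lemma for antiparallel strands. Inserting any number of full twists into a pretzel block changes $s$ by an amount in $[-2,0]$, independently of the number of twists. This is the move whose cost does not grow with $|a_j|$. Replace a positive block $a_j\neq a_{\min}$ by $-a_j-2$:
\begin{itemize}
\item $a_{\min}$ stays minimal, with all minimal parameters of one sign;
\item $-\sum_i\sign(a_i)$ rises by $2$;
\item the cost in $s$ is at most $2$.
\end{itemize}
Inducting on the number of positive blocks, or applying the lemma $c_+-1$ times, transports the sharp value from the base family to $K$. This gives $s(K)\ge-\sum_i\sign(a_i)+1$, and the Bennequin upper bound then forces equality.
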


        \begin{figure}[h]
            \centering
            \includegraphics[width=0.61\linewidth]{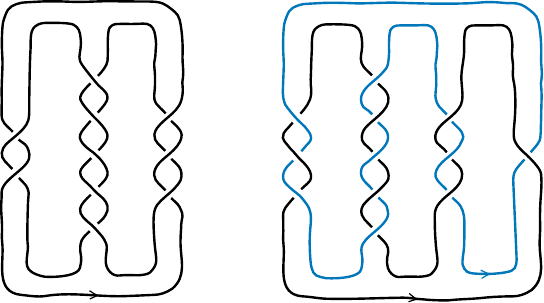}
            \caption{Examples of pretzel links: the knot $\P(2, -5, 3)$ and link $\P(3, 5, -3, 1)$.}
            \label{fig:pretzel_ex}
        \end{figure}

        Our proof of Theorem~\ref{thm:s_value_odd} does not rely on any specifics of the $s$-invariant beyond the following two facts:
        \begin{enumerate}[leftmargin=*, topsep=-2pt, itemsep=0pt]
            \item $\frac{1}{2}s$ is a slice-torus invariant (see Definition~\ref{def:slice_torus} and~\cite[Theorems 1, 2 and 4]{Ras10});
            \item $s$ is invariant under mutation (see~\cite[Theorem 1.16]{KWZ19}).
        \end{enumerate}
        Consequently, the same argument applies to any mutation-invariant slice-torus invariant.

        The notion of a slice-torus invariant goes back to Livingston~\cite{Liv04}, who showed that the Ozsv\'{a}th--Szab\'{o} $\tau$-invariant, defined in~\cite{OS03}, satisfies the properties later used to define this class (see Definition~\ref{def:slice_torus}). The term ``slice-torus invariant'' itself was introduced by Lewark~\cite{Lew14}, who initiated a systematic study of such invariants. Besides the aforementioned $\frac{1}{2}s$ and $\tau$, this class contains many invariants arising from various link homology theories, including the normalised $s_{N}$-invariants from Khovanov--Rozansky homology~\cite{Wu09, Lob09, Lob12}, the $q_{M}$-invariant from $\mathbb{Z}_{2}$-equivariant Seiberg--Witten Floer homology~\cite{IT24}.

        For most slice-torus invariants, mutation invariance has not been established. To handle this case, we replace the mutation assumption by a mild condition on the parameter of minimal absolute value, under which the same formula still holds.

        \begin{theorem}\label{thm:value_odd}
            Let $\nu$ be a slice-torus invariant, $n>0$ be odd, and $a_{1}, \ldots, a_{n}$ be odd integers. Let $a_{\min}$ be the parameter with minimal absolute value in the multiset obtained from the multiset $\{a_{1}, \ldots, a_{n}\}$ by iteratively removing pairs of the form $\{t, -t\}$ until no such pair remains.
            Then
            \begin{equation}\label{eq:value_odd}
                2\nu\bigl(\P(a_{1}, \ldots, a_{n})\bigr) = -\sum_{i=1}^{n} \sign(a_{i}) + \sign(a_{\min})
            \end{equation}
            provided that at least one of the following conditions holds:
            \begin{enumerate}[leftmargin=*, topsep=-2pt, itemsep=0pt, label=\textup{(\alph*)}]
                \item\label{value_odd_case_a} $\nu$ is mutation invariant;
                \item\label{value_odd_case_b} all parameters $a_i$ satisfying $|a_{i}|=\min\limits_{1\le j\le n}|a_{j}|$ have the same sign.
            \end{enumerate}
        \end{theorem}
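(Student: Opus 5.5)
The proof will squeeze $\nu$ between matching upper and lower bounds, using only the slice-torus axioms: $\nu$ is a homomorphism on the concordance group, $|\nu(K)|\le g_4(K)$, and $\nu(T_{p,q})=g_4(T_{p,q})$ for positive torus knots. We will also use the standard consequences: $\nu$ is monotone under crossing changes (changing a positive crossing to a negative one lowers $\nu$ by $0$ or $1$), and $\nu(\overline{K})=-\nu(K)$ for the mirror $\overline K$.

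\textbf{Step 1: uniform normalisation.} Mirroring sends $\P(a_1,\dots,a_n)$ to $\P(-a_1,\dots,-a_n)$. This negates both sides of \eqref{eq:value_odd}, so we may fix a sign convention, say $a_{\min}>0$. Call the step from the original multiset to the reduced one ``reduction''. Reduction can be realised geometrically: a pair of blocks $t,-t$ can be moved next to each other by flypes, or by mutations in case~\ref{value_odd_case_a}. The adjacent pair $(t,-t)$ then forms a rational tangle that cancels, and the result is isotopic to $\P(\ldots)$ with both blocks deleted. Flypes in a pretzel diagram realise only cyclic rotations and reversals of the block sequence. Arbitrary permutations of the blocks are mutations, which is why hypothesis~\ref{value_odd_case_a} is needed in general.

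\textbf{Step 2: the reduced case.} The sums $-\sum\sign(a_i)$ over the original and reduced multisets agree, and $a_{\min}$ is unchanged. In case~\ref{value_odd_case_a}, Step 1 therefore lets us assume that no pair $\{t,-t\}$ is present. Let $m=|a_{\min}|$; the plan below treats $m=1$ first. Split the blocks into $p$ positive and $q$ negative ones. For the bounds I would use two tools:
\begin{itemize}
\item[(i)] Saddle moves or crossing changes that turn blocks into $\pm1$ blocks. A block $a$ can be changed to $a\mp2$ by a single crossing change, and $\nu$ changes by at most one under this, with a known direction. This gives inequalities comparing the knot with $\P(\pm1,\dots,\pm1)$-type knots, which are torus knots or unknots.
\item[(ii)] The slice-genus bound $|\nu|\le g_4$, with $g_4$ estimated by a Seifert-type surface. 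For the odd pretzel knot, the genus-$(n-1)/2$ Seifert surface built from the $n$ twisted bands gives the right size.
\end{itemize}
Concretely, I would prove $2\nu\le -\sum\sign(a_i)+1$ and $2\nu\ge -\sum\sign(a_i)+1$ in the normalisation $a_{\min}>0$. For the first inequality, change crossings in the negative blocks to reach a knot where $\nu$ is computable. For the second, cut the positive blocks except the minimal one via oriented band moves (each positive odd block $a$ gets resolved into a two-component cobordism step), ending at a quasipositive or negative knot whose $\nu$ is given by Rudolph's slice-Bennequin equality. The crucial point is that a pair of opposite blocks of different absolute values $|t|<|u|$ still allows a cobordism of controlled genus. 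This uses the fact that $P(u,-t)$ is a $(2,\,u-t)$-type rational tangle.

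\textbf{Main obstacle.} The hardest step is the lower bound in the mixed-sign case without mutation, condition~\ref{value_odd_case_b}. There the reduction in Step 1 is unavailable, and one must instead run the band-move argument directly on the unreduced knot. Condition~\ref{value_odd_case_b} ensures that the block of globally minimal absolute value is unambiguous in sign, so it can serve as the ``surviving'' block in the cobordism regardless of the ordering of the blocks. For this step I would first try a direct application of the sharper slice-Bennequin inequality, $2\nu\ge w-O+1$ for a diagram with writhe $w$ and $O$ Seifert circles, after a local modification of the standard diagram. Separately, I would try an induction on $\sum|a_i|$ using the crossing-change monotonicity, with base case all $|a_i|=1$, where the knot is a torus knot $T_{2,k}$ and the formula is checked directly.
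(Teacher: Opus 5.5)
Your proposal has a genuine gap exactly where you locate the ``main obstacle''. In addition, the tools you list for the two bounds do not produce them.

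\emph{The bounds.} In the standard diagram the two strands of each block are antiparallel, and a block with parameter $a$ consists of crossings of sign $-\sign(a)$. So changing crossings in the negative blocks turns positive crossings into negative ones. By Lemma~\ref{lem:change_cross} this bounds $\nu(K)$ from \emph{below}, not above. The genus bound $|\nu|\le g_4\le (n-1)/2$ is too weak once $c_+\ge 2$. The upper bound $2\nu\le-\sum\sign(a_i)+1$ comes instead from Lewark's refined inequality $2\nu\le 1+w(D)+c(D)-2O_-(D)$ applied to the standard diagram (Lemma~\ref{lem:benneq_odd}). When both signs occur, that diagram gives only the lower bound $-\sum\sign(a_i)-1$. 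No bound depending only on $c_\pm$ can decide the value: $\P(3,-5,-5)$ and $\P(5,-3,-3)$ have $2\nu=2$ and $2\nu=0$ respectively. Your fallback, an induction on $\sum|a_i|$ using single crossing changes down to $\pm1$ blocks, cannot supply the missing input either. Each change costs up to $1$, and the number of changes is unbounded. Shrinking blocks also destroys hypothesis (b), together with the comparison of $|a_{\min}|$ against the other $|a_i|$, which is exactly what the formula depends on.

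\emph{The lower bound.} Your band-move idea is the right one, but as stated it fails for parity reasons. One oriented band move (the oriented smoothing of a crossing in block $j$) turns $\P(a_1,\ldots,a_n)$ into the two-component link $L=\P(a_1,\ldots,\widehat{a_j},\ldots,a_n)$. Deleting two positive blocks is therefore a genus-one cobordism, and it raises the target value of $2\nu$ by exactly $2$. So deleting the non-minimal positive blocks works when $c_+$ is odd. When $c_+$ is even you end on a link, and deleting one more (negative) block costs genus with no gain. That leaves $2\nu\ge c_--c_+-1$, off by $2$.

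The missing observation is that the reverse band move from $L$ can insert a block with \emph{any} odd parameter. Hence $K$ and $K'=\P(a_1,\ldots,-a_j-2,\ldots,a_n)$ cobound a genus-one surface, and $2\nu(K)\ge 2\nu(K')-2$. This is the antiparallel case of the full-twist lemma (Lemma~\ref{lem:full_twist}) that the paper uses. Apply it to a positive block other than (a copy of) $a_{\min}$. It lowers $c_+$ by one and preserves hypothesis (b), since $|-a_j-2|>|a_j|$. This gives an induction on $c_+$ with no parity issue, down to $c_+=1$. In that base case $\P(p,-q_1,\ldots,-q_{c_-})$ with $p<q_i$ is strongly quasipositive by Rudolph, so $2\nu=2g_3=n-1$ by Livingston and Gabai. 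The lower bound obtained this way meets the upper bound above.

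\emph{Case (a).} An adjacent pair $t,-t$ with $|t|\ge 3$ does not cancel up to isotopy, and the tangle sum is not rational. For instance, $\P(3,-3,1)$ has determinant $9$, so it is not the unknot. The pair cancels only up to concordance, via a genus-zero cobordism. That suffices, because $\nu$ is a concordance invariant.
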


        \vspace*{-0.45\baselineskip}

        We have checked formula~\eqref{eq:value_odd} for the $\tau$-invariant and the $s_{3}$-invariant (neither of which is known to satisfy condition~\ref{value_odd_case_a}) on a large number of odd pretzel knots not covered by case~\ref{value_odd_case_b}, using simple computer scripts available in the repository~\cite{Ste26}. Since all computed values agree with~\eqref{eq:value_odd}, we propose the following conjecture.

        \begin{conjecture}\label{conj:value_odd}
    		Let $\nu$ be a slice-torus invariant, $n>0$ be odd, and $a_{1}, \ldots, a_{n}$ be odd integers. Then formula~\eqref{eq:value_odd} holds.
        \end{conjecture}


    \subsection*{Acknowledgements}
        We are grateful to Dirk Sch\"{u}tz for making KnotJob~\cite{KJ} available, which allowed us to identify patterns in the values of the $s$-invariant on small pretzel knots. We also thank Viktoriia Georgievskaia for many helpful discussions.


    \section{Preliminaries}

        In this section we record the definition of slice-torus invariants and the few important properties that will be used throughout. All knots are assumed to be oriented and smooth.

        \begin{definition}\label{def:slice_torus}
            A real-valued knot invariant $\nu$ is called a \emph{slice-torus invariant} if it satisfies the following properties:
            \begin{enumerate}[leftmargin=!, topsep=-2pt, itemsep=0pt, align=left, font=\normalfont]
                \item[additivity.] $\nu(K_{1} \# K_{2}) = \nu(K_{1}) + \nu(K_{2})$ for every pair of knots $K_{1}, K_{2}$;
                \item[slice.] $|\nu(K)| \leqslant g_{4}(K)$ for every knot $K$;
                \item[torus.] $\nu(\T(p, q)) = g_{4}(\T(p, q))$ for all coprime positive integers $p, q$.
            \end{enumerate}
        \end{definition}

        Here $g_{4}(K)$ denotes the smooth four-genus of $K$, namely the minimum genus of a compact, connected, oriented surface smoothly and properly embedded in $\mathbb{D}^{4}$ with boundary $K \subset \mathbb{S}^{3} = \partial \mathbb{D}^{4}$. We write $g_{3}(K)$ for the Seifert genus. The notation $-K$ denotes the concordance inverse of $K$, namely its mirror with reversed orientation.

        Soon after proving that the $\tau$-invariant satisfies the properties of Definition~\ref{def:slice_torus}, Livingston investigated its behaviour under crossing changes, yielding the following lemma.

        \begin{lemma}[{\cite[Corollary 3]{Liv04}}]\label{lem:change_cross}
            Let $\nu$ be a slice-torus invariant, and let knots $K_{+}$ and~$K_{-}$ differ by a single crossing change from positive to negative. Then
            $$
            0 \leqslant \nu(K_{+}) - \nu(K_{-}) \leqslant 1.
            $$
        \end{lemma}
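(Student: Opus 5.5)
The plan is to derive both inequalities from the three axioms of Definition~\ref{def:slice_torus}. Each bound will come from a genus-one cobordism between suitable knots, turned into a four-genus bound for a connected sum.

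First I will record two formal consequences of the axioms.
\begin{itemize}
\item Since $K\#-K$ is slice, the slice property gives $\nu(K\#-K)=0$, and additivity then gives $\nu(-K)=-\nu(K)$.
\item If knots $K$ and $K'$ cobound a genus-$g$ surface in $\mathbb{S}^3\times[0,1]$, then gluing in the ribbon disk of $K'\#-K'$ shows $g_4(K\#-K')\le g$. Hence $|\nu(K)-\nu(K')|\le g$.
\end{itemize}
In particular, $\nu(\T(2,3))=g_4(\T(2,3))=1$ by the torus property.

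\textbf{Upper bound.} Near the changed crossing, $K_+$ and $K_-$ coincide with the same oriented resolution, namely a two-component link $L_0$ (oriented smoothing). A single saddle at the crossing of $K_+$ produces $L_0$, and a second saddle reverses this to give $K_-$. The resulting cobordism from $K_+$ to $K_-$ has two saddles and Euler characteristic $-2$. Since it is connected, it has genus one. By the second bullet, $|\nu(K_+)-\nu(K_-)|\le 1$, and in particular $\nu(K_+)-\nu(K_-)\le 1$.

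\textbf{Lower bound.} The idea is to compare $K_+$ not with $K_-$ but with $K_-\#\T(2,3)$: I claim there is a genus-one cobordism between $K_+$ and $K_-\#\T(2,3)$. Given this claim, the second bullet and additivity give
$$
|\nu(K_+)-\nu(K_-)-1|\le 1,
$$
that is, $\nu(K_+)-\nu(K_-)\ge 0$. Together with the upper bound, this proves the lemma.

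To build the cobordism, I use the local picture. Passing from $K_-$ to $K_+$ inserts a positive full twist into two coherently oriented strands. I plan to realise this twist as follows:
\begin{itemize}
\item take the connected sum of $K_-$ with a $\T(2,3)$ placed next to the crossing;
\item perform two saddle moves between the strands of the trefoil and the two strands of $K_-$ at the crossing, so that the three positive half-twists of the trefoil combine with the negative crossing to leave one positive crossing.
\end{itemize}
Two saddles keep the surface connected between knots and give genus one.

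The main obstacle is this last step: checking carefully, with orientations, that the two bands really turn $K_-\#\T(2,3)$ into $K_+$ rather than into a link or a differently twisted knot. If the direct picture is awkward, a fallback is to first exhibit a genus-one cobordism from $\T(2,3)$ to the two-crossing positive Hopf-type tangle replacement, and then apply it locally. Equivalently, I can show $g_4\bigl(K_+\#-K_-\#-\T(2,3)\bigr)\le 1$ by an explicit band argument.
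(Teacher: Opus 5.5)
Your reduction is sound, and it uses only the slice-torus axioms, which is all the paper needs; the paper itself gives no proof and simply imports the lemma from Livingston. The two-saddle cobordism through the oriented resolution gives the upper bound. If $K_+$ and $K_-\#\T(2,3)$ cobound a genus-one surface, then additivity and $\nu(\T(2,3))=1$ give $0\le\nu(K_+)-\nu(K_-)\le 2$, which is the lower bound.

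\textbf{The gap.} The only nontrivial step, the genus-one cobordism between $K_+$ and $K_-\#\T(2,3)$, is asserted but never constructed. The sketch you give for it does not add up. Three positive half-twists of the trefoil together with the negative crossing leave a net $+2$ half-twists: locally $\sigma_1^{-1}\sigma_1^{3}=\sigma_1^{2}$, not a single positive crossing $\sigma_1$. Replacing the crossing by $\sigma_1^{2}$ would even change the number of components, which is exactly your worry about landing on a link. So a construction that merely merges the trefoil's twists into the crossing cannot produce $K_+$; one of the two saddles must be spent deleting a crossing of the trefoil.

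\textbf{A construction that closes the gap.}
\begin{enumerate}
\item Locally, $K_+$ is $\sigma_1=\sigma_1^{-1}\sigma_1^{2}$ on the two coherently oriented strands $a,b$. The positive full twist $\sigma_1^{2}$ is isotopic, rel endpoints, to $b$ throwing a lasso once around $a$, with both new crossings positive.
\item The two arcs of the lasso's stem are oppositely oriented, so an untwisted band across the stem is an oriented saddle. It cuts the loop off, straightens $b$, and turns $K_+$ into $K_-\cup m$, where $m$ is a meridian of $K_-$ with $\mathrm{lk}(m,K_-)=+1$.
\item On the other side, take the oriented smoothing of one crossing of the $\T(2,3)=\widehat{\sigma_1^{3}}$ summand. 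This turns $K_-\#\T(2,3)$ into $K_-$ connected-summed with the positive Hopf link $\widehat{\sigma_1^{2}}$ along one component. The other Hopf component is a meridian of $K_-$ with linking number $+1$.
\item A meridian can be slid anywhere along the knot, so the two links in steps 2 and 3 are the same link $K_-\cup m$.
\item Gluing the two pairs of pants along this two-component link gives a connected cobordism with $\chi=-2$, hence of genus one, between $K_+$ and $K_-\#\T(2,3)$.
\end{enumerate}
With this in place your argument is complete. Your fallback about a Hopf-type replacement points in this direction, but as stated it is neither a genus-one cobordism nor verified.
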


        A crossing change is a special case of adding a single full twist on two strands, as shown in Figure~\ref{fig:change_crossing}. The following lemma, called the full twist lemma, describes the behaviour of slice-torus invariants under the local addition of an arbitrary number of full twists.

        \begin{figure}[h]
            \centering
            \includegraphics[width=0.61\linewidth]{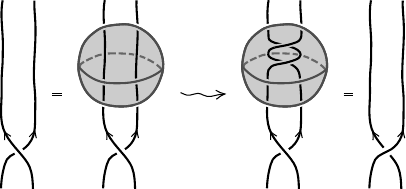}
            \caption{A crossing change realised by adding a single full twist of strands oriented in the same direction.}
            \label{fig:change_crossing}
        \end{figure}

        \begin{lemma}[{\cite[Propositions 2.1 and 2.4]{LZ24}}]\label{lem:full_twist}
            Let $K$ be a knot, and let $B$ be a $3$-ball intersecting $K$ in a trivial two-stranded tangle as in Figure~\ref{fig:full_twist_lemma}. For an integer $n$, let $K_{n}$ be obtained from $K$ by replacing this tangle inside $B$ with one having $n$ full twists. Let~$\nu$ be a slice-torus invariant, and let $m \geqslant n$ be integers. If the two strands of the tangle are oriented in opposite directions, then
            $$
            -1 \leqslant \nu(K_{m}) - \nu(K_{n}) \leqslant 0,
            $$
            and if they are oriented in the same direction, then
            $$
            m-n-1 \leqslant \nu(K_{m}) - \nu(K_{n}) \leqslant m-n.
            $$
        \end{lemma}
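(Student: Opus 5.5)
The plan is to reduce everything to two tools: the crossing change inequality of Lemma~\ref{lem:change_cross}, and the genus bound for cobordisms. The genus bound follows from the slice and additivity axioms. If knots $K$ and $K'$ cobound a connected oriented cobordism of genus $g$ in $\mathbb{S}^3\times[0,1]$, then $K\#(-K')$ bounds a surface of genus $g$ in $\mathbb{D}^4$. Since $\nu(-K')=-\nu(K')$ (because $K'\#(-K')$ is slice), this gives $|\nu(K)-\nu(K')|\le g$.

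The upper bound in both cases comes from crossing changes. Note first that changing one crossing in a twist region of $k$ half-twists leaves $k-2$ half-twists. Hence $K_{j}$ and $K_{j-1}$ differ by a single crossing change, and all crossings in the region have the same sign.
\begin{itemize}
\item If the strands are oriented in the same direction, the twists in $K_j$ (for the sign convention of Figure~\ref{fig:full_twist_lemma}) are positive. Lemma~\ref{lem:change_cross} then gives $0\le \nu(K_j)-\nu(K_{j-1})\le 1$, and summing over $j=n+1,\dots,m$ gives $0\le\nu(K_m)-\nu(K_n)\le m-n$.
\item If the strands are oriented oppositely, each full twist consists of two negative crossings. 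The same lemma then gives $\nu(K_j)\le\nu(K_{j-1})$, hence $\nu(K_m)-\nu(K_n)\le 0$.
\end{itemize}
I would check these sign conventions against Figure~\ref{fig:change_crossing}.

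For the remaining lower bound in the antiparallel case, I use that antiparallel strands admit an oriented saddle joining them across the twist region. Doing this saddle at the top of the region, the twists can then be untwisted by isotopy, since twisting two arcs that now belong to a "cup" and "cap" is undone by rotating the cap. Thus both $K_m$ and $K_n$ are related by a single saddle to the same two-component link $L$. Gluing the two saddle cobordisms $K_m\to L\to K_n$ gives a connected genus-one cobordism. The genus bound then yields $|\nu(K_m)-\nu(K_n)|\le 1$, in particular $\nu(K_m)-\nu(K_n)\ge -1$.

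For the lower bound $m-n-1$ in the parallel case, I would compare $K_m$ with $K_n\#\T(2,2(m-n)+1)$. By the torus and additivity axioms, this knot has $\nu = \nu(K_n)+(m-n)$. It then suffices to build a genus-one cobordism between $K_m$ and $K_n\#\T(2,2(m-n)+1)$, since the genus bound gives $\nu(K_m)\ge \nu(K_n)+(m-n)-1$.

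The intended construction splits the braid word $\sigma_1^{2m}=\sigma_1^{2n}\sigma_1^{2(m-n)}$. One saddle on $K_m$, placed between the two blocks, separates off the second block as a $2$-strand twist closing up into a torus link summand. A second saddle adjusts the parity so that the summand becomes the knot $\T(2,2(m-n)+1)$ connected-summed onto $K_n$.

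Getting this picture exactly right is the main obstacle: the orientations of the saddles and the parity of the torus summand must be compatible, so that the composite is a connected oriented surface of genus exactly one. If the direct construction fails, my fallback is to first pass to the same-direction analogue of the antiparallel trick. This means applying a single crossing change to reduce to $\T(2,\cdot)$ with an even number of crossings, and then absorbing the resulting $\pm1$ into the bound.
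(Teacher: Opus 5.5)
Your proposal is correct and is essentially the argument the paper defers to \cite{LZ24}: crossing changes via Lemma~\ref{lem:change_cross} give both upper bounds, genus-one band cobordisms give both lower bounds, and, as the paper remarks, nothing uses integrality. The step you flagged works as you describe, provided the first band joins the left strand to itself just below and just above the $\sigma_1^{2(m-n)}$ block (this is coherent because both feet lie on the same upward strand, whereas a band between the two parallel strands themselves would be incoherent or would only change the crossing count); this band yields $K_n\#\T(2,2(m-n))$, and inserting one crossing in that summand then gives $K_n\#\T(2,2(m-n)+1)$ through a two-component middle link, so the fallback is unnecessary.
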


        Although~\cite{LZ24} states these propositions for integer-valued slice-torus invariants, the inequalities stated here follow from the cobordism and crossing-change arguments in their proofs without using integrality.

        \begin{figure}[h]
            \centering
            \includegraphics[width=0.61\linewidth]{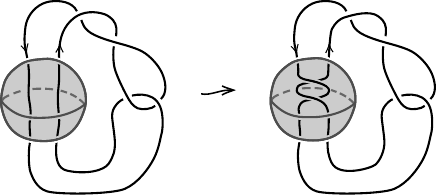}
            \caption{
            Inserting one negative full twist inside the grey 3-ball with strands oriented in opposite directions: $4_1$ becomes $6_1$.}
            \label{fig:full_twist_lemma}
        \end{figure}

        Recall that the \emph{Seifert graph} $\Gamma(D)$ of a diagram $D$ is the graph whose vertices correspond to the Seifert circles of $D$ and whose edges correspond to the crossing bands joining these circles in the standard Seifert surface.

        The following result of Lewark, known as the slice-Bennequin inequality, provides upper and lower bounds for slice-torus invariants directly from a diagram.

        \begin{lemma}[{\cite[Theorem 5]{Lew14}}]\label{lem:bennequin}
            Let $D$ be a diagram of a knot $K$ with writhe $w(D)$ and Seifert circle number $c(D)$. Let $O_{\pm}(D)$ denote the number of connected components of the graph obtained from the Seifert graph $\Gamma(D)$ of $D$ by deleting all edges corresponding to crossings of sign $\mp$. Then for any slice-torus invariant $\nu$, we have
            \begin{equation}\label{ineq:benneq}
                -1 + w(D) - c(D) + 2O_{+}(D) \leqslant 2\nu(K) \leqslant 1 + w(D) + c(D) - 2O_{-}(D).
            \end{equation}
        \end{lemma}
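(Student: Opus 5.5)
The plan is to prove only the lower bound in~\eqref{ineq:benneq} and obtain the upper bound by mirroring. Passing to the mirror negates the writhe, swaps the roles of $O_{+}$ and $O_{-}$, keeps $c(D)$, and sends $\nu$ to $-\nu$ (up to reversing orientation, which does not affect $\nu$). The lower bound itself will come from two ingredients. First, an oriented cobordism from $K$ to a knot $K''$ that admits a positive diagram, which gives $|\nu(K)-\nu(K'')|\le g(\text{cobordism})$ by the slice property and additivity. Second, the evaluation of $\nu$ on knots with positive diagrams.

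\textbf{Step 1: choosing which negative crossings to smooth.} Let $n_{\pm}$ be the numbers of positive and negative crossings of $D$, and let $\Gamma_{+}$ be the Seifert graph with the negative edges deleted. It has $O_{+}=O_{+}(D)$ components. Contract each component of $\Gamma_{+}$ to a point. The negative edges then form a connected graph on $O_{+}$ vertices, because $\Gamma(D)$ is connected for a knot diagram. Choose a spanning tree of it, consisting of $O_{+}-1$ negative crossings, and apply oriented (Seifert) smoothings to the remaining $n_{-}-O_{+}+1$ negative crossings. Each smoothing is a saddle. Since the Seifert graph of the result is still connected and the circles are unchanged, the result is again a knot diagram $D''$ of a knot $K''$. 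The cobordism from $K$ to $K''$ has Euler characteristic $-(n_{-}-O_{+}+1)$, hence genus $(n_{-}-O_{+}+1)/2$. This gives
$$
2\nu(K)\ \ge\ 2\nu(K'') - (n_{-}-O_{+}+1).
$$

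\textbf{Step 2: computing $\nu(K'')$.} In $\Gamma(D'')$ each remaining negative crossing is a bridge edge, so it is nugatory and can be undone by a flype-free twist. Hence $K''$ is the connected sum of $O_{+}$ knots given by positive diagrams $D_{1},\dots,D_{O_{+}}$. These have $n_{+}$ crossings and $c(D)$ Seifert circles in total. For a knot with a positive diagram one has $\nu=g_{4}=g_{3}=(n(D_i)-c(D_i)+1)/2$. To justify this, observe that a positive diagram is quasipositive, or argue directly: smoothing positive crossings yields a cobordism to a positive braid closure, which is a cobordism into a torus knot, and we combine the torus axiom with the slice bound and Lemma~\ref{lem:change_cross}. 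By additivity,
$$
2\nu(K'')=\sum_{i}\bigl(n(D_i)-c(D_i)+1\bigr)=n_{+}-c(D)+O_{+}.
$$
Substituting this into Step 1 gives $2\nu(K)\ge n_{+}-n_{-}-c(D)+2O_{+}-1=w(D)-c(D)+2O_{+}(D)-1$, which is the lower bound.

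\textbf{Main obstacle.} I expect the hardest step to be the fact that $\nu$ equals the Seifert genus on knots with positive diagrams, for an arbitrary slice-torus invariant. The approach I would try first is to realise the positive diagram as a positive braid closure via Vogel's algorithm, which preserves positivity. I would then add positive crossings to reach a torus knot $\T(p,q)$. Lemma~\ref{lem:change_cross}, or equivalently the cobordism bound, together with the torus axiom squeezes $\nu$ between $g_{3}$ and $g_{4}\le g_{3}$. A smaller point to check carefully is that the retained negative crossings really are nugatory: a bridge in the Seifert graph gives a crossing whose removal disconnects the diagram.
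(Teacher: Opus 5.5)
The paper gives no proof of this lemma; it is quoted from Lewark~\cite{Lew14}, so your argument has to stand on its own. The architecture does yield the bound: keep a spanning tree of negative edges in the contracted graph, smooth the remaining negative crossings, split along the bridges into positive pieces, and mirror. However, two steps fail as written.

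First, in Step~1 the smoothed diagram need not be a knot diagram. Connectivity of the Seifert graph only means the diagram is non-split. Each smoothing changes the number of components by $\pm1$, so $K''$ is a link whenever $m=n_{-}-O_{+}+1$ is odd. For the closure of the $2$-braid $\sigma_{1}^{2}\sigma_{1}^{-1}$ (an unknot diagram) one has $c(D)=2$, $O_{+}(D)=1$ and $m=1$. Smoothing gives the Hopf link, where neither $\nu(K'')$ nor your genus count makes sense.

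To repair this, suppose the piece $D_{i}$ has $r_{i}$ components. Successively insert $r_{i}-1$ positive crossings, each parallel to an existing crossing between two different components. Each insertion is a saddle merging two components, and it changes neither the Seifert circles nor the bridges. The cobordism now ends at a knot and has $m+\sum_{i}(r_{i}-1)$ saddles. The extra saddles are exactly offset by the extra crossings in $\sum_{i}\bigl(n(D_{i})-c(D_{i})+1\bigr)$, so $2\nu(K)\ge n_{+}-c(D)+O_{+}-m$ still follows.

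Changing those negative crossings instead of smoothing them would not work: the changed crossings stay in the Seifert graph, so the tree edges need not remain bridges. For the upper bound, apply the lower bound to the reversed mirror diagram, which represents $-K$. Reversal invariance of $\nu$ is not an axiom and is not needed.

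Second, your planned proof that $2\nu=n-c+1$ for positive diagrams fails. Vogel moves are Reidemeister~II moves creating one positive and one negative crossing, so they do not preserve positivity. Indeed, positive knots need not be positive braid closures: $5_{2}$ is positive but not fibred, whereas closures of positive braids are fibred. What Vogel's algorithm does preserve is $w(D)$ and $c(D)$.

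So braid $D$ on $c=c(D)$ strands, with say $p$ positive and $q$ negative letters. Remove each negative letter by one saddle (insert $\sigma_{k}$ next to $\sigma_{k}^{-1}$ and cancel). Then insert positive letters until the word is $(\sigma_{1}\cdots\sigma_{c-1})^{N}$ with $N\ge p$ and $\gcd(N,c)=1$. This is a connected cobordism from $K$ to $\T(c,N)$ with $q+(c-1)N-p$ saddles, so the axioms give
$2\nu(K)\ge (c-1)(N-1)-\bigl(q+(c-1)N-p\bigr)=w(D)-c(D)+1$
for every diagram. For a positive piece this is exactly the lower bound $2\nu(K_{i})\ge n(D_{i})-c(D_{i})+1$ that you need, and the canonical Seifert surface gives the reverse inequality. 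Alternatively, cite Rudolph's theorem that positive knots are strongly quasipositive, together with $\nu=g_{3}$ on strongly quasipositive knots.
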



    \section{Proofs of the main theorems}

        To begin, we explicitly write down the bounds in inequality~\eqref{ineq:benneq} for odd pretzel knots.

        \begin{lemma}\label{lem:benneq_odd}
    		Let $\nu$ be a slice-torus invariant, $n>0$ be odd, and $a_{1}, \ldots, a_{n}$ be odd integers. Then
    		$$
    		-\sum_{i=1}^{n} \sign(a_{i}) - 1 \leqslant 2\nu\bigl(\P(a_{1}, \ldots, a_{n})\bigr) \leqslant -\sum_{i=1}^{n} \sign(a_{i}) + 1.
    		$$
    	\end{lemma}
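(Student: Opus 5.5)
The plan is to apply the slice-Bennequin inequality, Lemma~\ref{lem:bennequin}, directly to the standard diagram $D$ of $\P(a_{1},\ldots,a_{n})$, with the orientation fixed in the introduction. All that is needed is to compute $w(D)$, $c(D)$ and $O_{\pm}(D)$ and simplify.

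\emph{Step 1: orientation.} First I will show that in every block the two strands are oriented in opposite directions (antiparallel, both vertical). The argument follows a strand through the diagram. Since $|a_{i}|$ is odd, each block connects its top-left endpoint to its bottom-right endpoint and its top-right endpoint to its bottom-left endpoint. Since $n$ is odd and $K$ is a knot, traversing the knot starting from the bottom arc, oriented left to right, passes through the blocks in such a way that in each block one strand goes up and the other goes down.

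This step is the main obstacle. It must be checked carefully, together with a fixed crossing convention. The result I expect is that every crossing in block $i$ has sign $\varepsilon_{i}=-\sign(a_{i})$, which matches the term $-\sum\sign(a_{i})$ in the claim.

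\emph{Step 2: Seifert circles and Seifert graph.} Smoothing an antiparallel vertical twist region with $|a_{i}|$ crossings gives horizontal smoothings. So block $i$ contributes $|a_{i}|-1$ small Seifert circles stacked between consecutive crossings. In addition there is one large top circle and one large bottom circle, so
$$c(D)=2+\sum_{i=1}^{n}(|a_{i}|-1).$$
The Seifert graph $\Gamma(D)$ therefore consists of two vertices, top and bottom, joined by $n$ internally disjoint paths. The $i$-th path has $|a_{i}|$ edges, all of sign $\varepsilon_{i}$. Also $w(D)=\sum_{i}\varepsilon_{i}|a_{i}|$.

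\emph{Step 3: components and arithmetic.} Let $P=\{i:\varepsilon_{i}=+1\}$ and $N=\{i:\varepsilon_{i}=-1\}$. Deleting the negative edges isolates the $|a_{i}|-1$ interior vertices of each path with $i\in N$. It keeps the top and bottom vertices connected exactly when $P\neq\emptyset$. Hence
$$O_{+}(D)=\sum_{i\in N}(|a_{i}|-1)+\delta,$$
where $\delta=1$ if $P\neq\emptyset$ and $\delta=2$ otherwise.

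Substituting into the left side of~\eqref{ineq:benneq}, each $i\in P$ contributes $|a_{i}|-(|a_{i}|-1)=1$. Each $i\in N$ contributes $-|a_{i}|-(|a_{i}|-1)+2(|a_{i}|-1)=-1$. The constant terms give $-1-2+2\delta$. So the lower bound equals
$$\sum_{i}\varepsilon_{i}-3+2\delta\;\geqslant\;-\sum_{i}\sign(a_{i})-1.$$
The upper bound follows by the symmetric computation, exchanging the roles of $P$ and $N$. Alternatively, it follows by applying the lower bound to the mirror $-K=\P(-a_{1},\ldots,-a_{n})$, since $\nu(-K)=-\nu(K)$ by additivity and the slice property. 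This yields $2\nu\leqslant -\sum\sign(a_{i})+1$, completing the proof.
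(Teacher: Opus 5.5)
Your proposal is correct and follows the paper's proof. You apply Lemma~\ref{lem:bennequin} to the standard diagram, read off the Seifert graph as two vertices joined by $n$ paths of $|a_i|$ edges of sign $-\sign(a_i)$, and compute $w(D)$, $c(D)$ and $O_{\pm}(D)$ exactly as the paper does; the only differences are cosmetic (block-by-block tallying with $\delta\geqslant 1$ absorbing the degenerate cases, and the optional mirror argument for the upper bound, which is valid because reversing the orientation of the whole diagram leaves $w$, $c$ and $O_{\pm}$ unchanged).
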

        \begin{proof}
            Apply the Bennequin-type inequality~\eqref{ineq:benneq} to the standard oriented diagram $D$ of the odd pretzel knot $\P(a_{1}, \ldots, a_{n})$. The Seifert graph of $D$ consists of two distinguished vertices joined by $n$ paths. The $i$-th path corresponds to the $i$-th twist block and has~$|a_{i}|$ edges, all of sign $-\sign(a_{i})$, and $|a_{i}|-1$ internal vertices. See Figure~\ref{fig:odd_seifert_ex} for an example.

            \begin{figure}[h]
                \centering
                \includegraphics[width=0.58\linewidth]{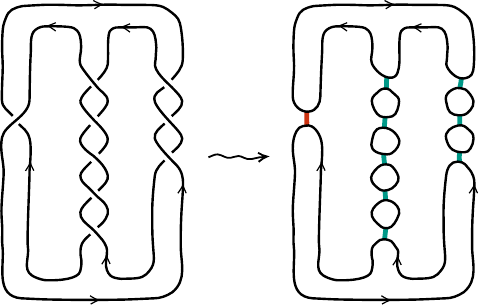}
                \caption{The standard diagram of the pretzel knot $\P(1, -5, -3)$ and its Seifert graph. Red edges correspond to negative crossings, green edges to positive ones.}
                \label{fig:odd_seifert_ex}
            \end{figure}

            Represent the multiset $\{a_{1}, \ldots, a_{n}\}$ as the union of positive parameters $p_{1}, \ldots, p_{c_{+}} > 0$ and negative ones $-q_{1}, \ldots, -q_{c_{-}} < 0$. For these two subsets, set
            $$
            P = \sum_{i=1}^{c_{+}} p_{i} \qquad \text{and} \qquad Q = \sum_{j=1}^{c_{-}} q_{j}.
            $$

            Since the sign of every crossing in a block is opposite to the sign of the corresponding parameter, the writhe of $D$ is
            $$
            w(D) = -\sum_{i=1}^{n} a_{i} = Q - P,
            $$
            and the number of Seifert circles is
            $$
            c(D) = 2 + \sum_{i=1}^{n} (|a_{i}|-1) = 2 + P-c_{+} + Q-c_{-}.
            $$

            After the negative edges are deleted from the Seifert graph, the $p_{i}-1$ internal vertices of each path corresponding to a positive parameter $p_{i}$ become isolated. The paths corresponding to negative parameters remain intact. If $c_{-} \geqslant 1$, these paths and the two distinguished vertices form a single connected component. If $c_{-} = 0$, the two distinguished vertices form two separate components. Hence the number of connected components of the Seifert graph after deleting the negative edges equals
            $$
            O_{+}(D) =
            \begin{cases}
                \sum_{i=1}^{c_{+}} (p_{i}-1) + 1 = P-c_{+} + 1 & \text{if } c_{-} \geqslant 1,\\
                \sum_{i=1}^{c_{+}} (p_{i}-1) + 2 = P-c_{+} + 2 & \text{if } c_{-} = 0.
            \end{cases}
            $$

            Similarly, the number of connected components after deleting the positive edges is
            $$
            O_{-}(D) =
            \begin{cases}
                \sum_{j=1}^{c_{-}} (q_{j}-1) + 1 = Q-c_{-} + 1 & \text{if } c_{+} \geqslant 1,\\
                \sum_{j=1}^{c_{-}} (q_{j}-1) + 2 = Q-c_{-} + 2 & \text{if } c_{+} = 0.
            \end{cases}
            $$

            Assume first that $c_{\pm} \geqslant 1$. Applying the Bennequin inequality~\eqref{ineq:benneq}, we get
            \begin{align*}
                2\nu(K) & \leqslant 1 + w(D) + c(D) - 2O_{-}(D) = \\
                        & = 1 + (Q-P) + \bigl(2 + P-c_{+} + Q-c_{-}\bigr) - 2\bigl(Q - c_{-} + 1\bigr) = \\
                        & = c_{-} - c_{+} + 1,
            \end{align*}
            and similarly,
            \begin{align*}
                2\nu(K) & \geqslant -1 + w(D) - c(D) + 2O_{+}(D) = \\
                        & = -1 + (Q-P) - \bigl(2 + P-c_{+} + Q-c_{-}\bigr) + 2\bigl(P - c_{+} + 1\bigr) = \\
                        & = c_{-} - c_{+} - 1,
            \end{align*}
            which, together with the identity $-\sum_{i=1}^{n} \sign(a_{i}) = c_{-} - c_{+}$, completes the proof in this case.

            If $c_{-} = 0$, the upper bound $2\nu(K) \leqslant c_{-} - c_{+} + 1$ still holds, while the lower bound becomes
            \begin{align*}
                2\nu(K) & \geqslant -1 + (Q-P) - \bigl(2 + P-c_{+} + Q-c_{-}\bigr) + 2\bigl(P - c_{+} + 2\bigr) = \\
                        & = c_{-} - c_{+} + 1,
            \end{align*}
            so $2\nu(K) = c_{-} - c_{+} + 1$.

            The case $c_{+} = 0$ is analogous and gives $2\nu(K) = c_{-} - c_{+} - 1$.
        \end{proof}

        We proceed to the proof of Theorem~\ref{thm:value_odd}. We prove case~\ref{value_odd_case_b} first, and then deduce case~\ref{value_odd_case_a} from it.


        \begin{proof}[Proof of Theorem~\ref{thm:value_odd}, case~\ref{value_odd_case_b}]
            We use the notation from the proof of Lemma~\ref{lem:benneq_odd}. Consider the pretzel knot $K = \P(a_{1}, \ldots, a_{n})$.

            The case $n = 1$ is trivial, since $K$ is the unknot. If $c_{+} = 0$ or $c_{-} = 0$, then the upper and lower bounds in the Bennequin inequality coincide, as is immediate from the proof of Lemma~\ref{lem:benneq_odd}, and are equal to the right-hand side of~\eqref{eq:value_odd}.

            Now assume that $c_{\pm} \geqslant 1$. We may suppose $a_{\min} > 0$: otherwise replace $K$ by its concordance inverse $-K = \P(-a_{n}, \dots, -a_{1})$. Since $\nu(-K) = -\nu(K)$, the formula for $K$ follows from that for $-K$. Finally, the pretzel knot is invariant under cyclic permutation of the blocks, so we may further assume $a_{1} = a_{\min}$.

            For a fixed $n$, we argue by induction on $c_{+}$, the number of positive parameters.

            Consider the base case $c_{+} = 1$. Then
            $$
            K = \P(p_{1}, -q_{1}, \ldots, -q_{c_{-}}),
            $$
            where $p_{1} = a_{1}$. The assumptions of the theorem give $p_{1}-q_{i} < 0$, and $-q_{i}-q_{j} < 0$ for all $1 \leqslant i, j \leqslant c_{-}$. By a theorem of Rudolph~\cite[Theorem 4.5]{Rud01}, such pretzel knots are strongly quasipositive. A result of Livingston~\cite[Theorem 4]{Liv04} then implies that every slice-torus invariant of $K$ agrees with its three-genus, which for odd pretzel knots was computed by Gabai~\cite[Theorem 3.2]{Gab86} to equal $\frac{n-1}{2}$. Consequently,
            $$
            2\nu(K) = 2g_{3}(K) = n-1 = c_{-} - c_{+} + 1.
            $$

            Now let $c_{+} \geqslant 2$ and choose $a_{j} > 0$ with $j \neq 1$. Consider the knots
            \begin{gather*}
                K = \P(a_{1}, \ldots, a_{j}, \ldots, a_{n}), \\
                K' = \P(a_{1}, \ldots, -a_{j}-2, \ldots, a_{n}).
            \end{gather*}
            They are related by the operation of adding full twists on two strands in the $j$-th block, performed inside a $3$-ball as shown in Figure~\ref{fig:ft_odd_ex}. In the standard diagram of an odd pretzel knot, the strands in each twist block are oppositely oriented, so adding full twists falls under the first case of Lemma~\ref{lem:full_twist}. The knot $K$ is obtained from $K'$ by adding~$a_{j} + 1 > 0$ full twists, hence
            $$
            -2 \leqslant 2\nu(K) - 2\nu(K') \leqslant 0.
            $$

            \begin{figure}[h]
                \centering
                \includegraphics[width=0.6\linewidth]{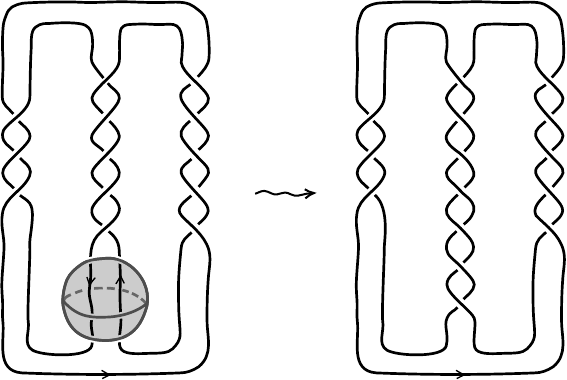}
                \caption{Left: the knot $K = \P(3, 5, -5)$ with a grey ball $B$. Right: the knot $K' = \P(3, -7, -5)$, obtained from $K$ by adding $-6$ full twists inside $B$.}
                \label{fig:ft_odd_ex}
            \end{figure}

            Since $\lvert -a_{j}-2 \rvert > |a_{j}| \geqslant |a_{\min}|$ and all other parameters remain unchanged, the parameter of minimal absolute value in $K'$ is still $a_{\min}$, and it appears with only one sign. Thus $K'$ satisfies the hypotheses of Theorem~\ref{thm:value_odd}. This knot has $c_{+}-1$ positive and $c_{-}+1$ negative blocks, so by the induction hypothesis,
            $$
            2\nu(K') = (c_{-}+1) - (c_{+}-1) + 1 = c_{-} - c_{+} + 3,
            $$
            whence
            \begin{gather*}
                -2 \leqslant 2\nu(K) - (c_{-} - c_{+} + 3) \leqslant 0, \\
                c_{-} - c_{+} + 1 \leqslant 2\nu(K) \leqslant c_{-} - c_{+} + 3.
            \end{gather*}

            On the other hand, Lemma~\ref{lem:benneq_odd} yields $2\nu(K) \leqslant c_{-} - c_{+} + 1$, so together with the lower bound just obtained, this forces equality.
        \end{proof}


        \begin{proof}[Proof of Theorem~\ref{thm:value_odd}, case~\ref{value_odd_case_a}]
            In this case, $\nu$ is invariant under mutation. Hence its value on a pretzel knot is independent of the order of the blocks: any transposition of two adjacent blocks can be realised by a mutation, as illustrated in Figure~\ref{fig:mutation}.

            \begin{figure}[h]
                \centering
                \includegraphics[width=0.6\linewidth]{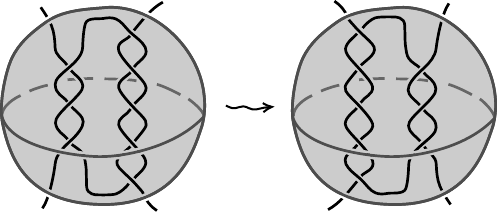}
                \caption{Realising a transposition of blocks with parameters $3$ and $-5$ by a mutation.}
                \label{fig:mutation}
            \end{figure}

            Now suppose the tuple of parameters contains two adjacent parameters of the form $t$ and $-t$. Then the knot can be replaced by a concordant pretzel knot with this pair removed, as shown in Figure~\ref{fig:remove_adjacent}. Since slice-torus invariants are concordance invariants, the value of $\nu$ does not change under such a reduction.

            \begin{figure}[h]
                \centering
                \includegraphics[width=0.8\linewidth]{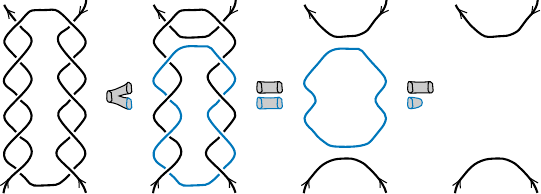}
                \caption{The operation of cancelling an adjacent pair $5, -5$ by a genus-zero cobordism.}
                \label{fig:remove_adjacent}
            \end{figure}

            By repeatedly applying these two operations --- permuting blocks to make a pair $t, -t$ adjacent, and then cancelling it --- we obtain a pretzel knot $K'$ whose parameter multiset has no such pairs and such that $\nu(K) = \nu(K')$. In particular, the parameter of minimal absolute value in $K'$ appears with only one sign, so $K'$ satisfies the hypotheses of case~\ref{value_odd_case_b}, which has already been proved. Moreover, since each cancelled pair consists of one positive and one negative parameter, the sum $\sum_{i} \sign(a_{i})$ is the same for $K$ and~$K'$. Applying the already established formula to $K'$ and using the equality of the sign sums, we obtain the desired formula for $K$.
        \end{proof}


    \section{Further remarks}

        \subsection{Applications}

            Formula~\eqref{eq:s_value_odd} for the $s$-invariant follows immediately from the fact that~$\frac{1}{2}s$ is a mutation-invariant slice-torus invariant, as noted in the introduction. We note that the same formula could also be proved without invoking Rudolph's quasipositivity criterion: for the base case $c_{+} = 1$, one may use the diagrammatic method of Kim and Sano~\cite[Proposition 5.9]{KS25}.

            The computation of the Rasmussen invariant for odd pretzel knots was originally motivated by the question of smooth sliceness for the topologically slice knots constructed by the first author in~\cite{Bel26}. The knot $\P(13, -15, -17, 29, 71)$ is one such example. Those knots all have two negative and three positive parameters, all distinct in absolute value, with the parameter of minimal absolute value being positive. Applying Theorem~\ref{thm:s_value_odd} gives $s = 2 - 3 + 1 = 0$ for each of them. Unfortunately, this yields no information about their smooth sliceness. Nonetheless, the investigation of these examples led to the general formula.

            Combining the formula~\eqref{eq:s_value_odd} with Bryant's reformulation~\cite[Theorem 2.1]{Bry17} of Jabuka's formula for the signature of odd pretzel knots~\cite[Theorem~1.18(3)]{Jab10}, we obtain the equality
            \begin{equation}\label{eq:s_plus_signature}
                (s + \sigma)\bigl(\P(a_{1}, \ldots, a_{n})\bigr) = \sign(a_{\min}) - \sign\left(\sum_{i=1}^{n}\frac{1}{a_{i}}\right),
            \end{equation}
            which allows one to determine directly from the multiset of parameters whether the invariants~$s$ and $-\sigma$ agree on the given knot.

            The explicit formulae also allow us to construct infinite families of knots on which $s$ and~$-\sigma$ agree or differ. Indeed, multiplying all parameters by the same positive odd integer leaves both invariants unchanged. Hence, starting from the knot $\P(-3, 5, 7, 9, -11)$, for which $s = -2$ and $-\sigma = 0$, we obtain a family $\bigl\{\P(-3k, 5k, 7k, 9k, -11k)\bigr\}$ with the same values of $s$ and $-\sigma$, where $k$ ranges over the positive odd integers. These knots are pairwise distinct by Kawauchi's classification of pretzel knots~\cite[Theorem I]{Kaw85}.


        \subsection{Open questions}
            As far as we know, the normalised Rasmussen invariants $\frac{1}{2} s_{\mathbb{F}}$, where $\mathbb{F}$ ranges over fields, are currently the only slice-torus invariants for which mutation invariance has been proved, and no slice-torus invariant is known to fail mutation invariance. Consequently, case~\ref{value_odd_case_a} of Theorem~\ref{thm:value_odd} applies, at present, only to these invariants.

            A representative example of an odd pretzel knot that does not satisfy the hypotheses of case~\ref{value_odd_case_b} of Theorem~\ref{thm:value_odd} is the knot $\P(7, 3, 5, -3, -5)$. In the work~\cite{FLL25}, the authors introduced the notion of squeezed knots, on which all slice-torus invariants agree. Direct attempts to construct a squeezing cobordism for this knot have been unsuccessful.

            The knot $K = \P(7, 3, 5, -3, -5)$ is mutant to the ribbon knot $\P(7, 3, -3, 5, -5)$, yet is not topologically slice by a remark of Herald, Kirk and Livingston~\cite{HKL10}. Quasipositive knots satisfy $s = 2g_{4}$ by a result of Shumakovitch~\cite[Proposition~1.G]{Shu07}. Since~$g_{4}(K) > 0$ and $s(K) = 0$ by Theorem~\ref{thm:s_value_odd}, the knot $K$ is not quasipositive. The same argument applied to the mirror of $K$ shows that $K$ is not quasinegative either. Furthermore,~$K$ is neither homogeneous by the result of Stoimenow~\cite[Theorem~4.1]{Sto08} nor quasialternating by the result of Greene~\cite[Theorem 1.4]{Gre10}. It is not covered by any of the families from Proposition~1.2 of~\cite{FLL25}, with the possible exception of the family of pseudoalternating knots. Thus this pretzel knot appears to lie beyond the reach of the standard tools for computing slice-torus invariants.

    
    \bibliographystyle{alpha}
    \bibliography{references}
\end{document}